\documentclass[reqno]{amsart}

\usepackage{amssymb, calrsfs, graphics, graphicx, enumerate, enumitem, url, xcolor, hyperref, mathrsfs, listings, comment}
\usepackage{tabularx}
\usepackage[ruled, lined, linesnumbered, longend]{algorithm2e}
\usepackage[justification=centering]{caption}

\newtheorem{theorem}{Theorem}[section]
\newtheorem{lemma}[theorem]{Lemma}

\newtheorem{proposition}[theorem]{Proposition}

\newtheorem{corollary}[theorem]{Corollary}

\numberwithin{equation}{section}

\theoremstyle{remark}

\lstdefinestyle{CStyle}{
    basicstyle=\footnotesize,
    breakatwhitespace=false,         
    breaklines=true,                 
    captionpos=b,                    
    keepspaces=true,                 
    numbers=left,                    
    numbersep=5pt,                  
    showspaces=false,                
    showstringspaces=false,
    showtabs=false,                  
    tabsize=2,
    language=C
}
\title[Permutations of $S_n$]{A conjecture of Glasby, Praeger, and Unger on  permutations of $S_n$}

\author[C. Bellotti]{Chiara Bellotti}
\address{School of Science\\ The University of New South Wales Canberra, Australia}
\thanks{Both authors were supported by Australian Research Council Discovery Project DP240100186. In addition, C.B.\ was supported by an Australian Mathematical Society Lift-off Fellowship.}
\email{c.bellotti@unsw.edu.au}

\author[T.~S. Trudgian]{Timothy S. Trudgian}
\address{School of Science\\ The University of New South Wales Canberra, Australia}
\email{t.trudgian@unsw.edu.au}

\keywords{permutations, prime number theorem, cycles, primes}
\subjclass[2020]{05A05; 11N05}
\begin{document}
\maketitle
\begin{abstract}
We prove a conjecture of Glasby, Praeger, and Unger concerning the symmetric group $S_{n}$. Let $\pi_{n}$ denote the proportion of elements of $S_{n}$ that are pre-$p$-cycles for some prime $p\in[2, n-3]$. We prove that $\pi_{n} > 1/3$ for all $n\geq 8$.
\end{abstract}
\section{Introduction and statement of result}
Consider the symmetric group $S_{n}$. A particular permutation $g\in S_{n}$ has a \textit{$k$-cycle} if it has a single cycle of length $k$, where $k>1$. As noted in the introduction to \cite{GlasbyPraegerUnger}, it is of interest to locate $p$-cycles, where $p$ is a prime. To facilitate such a location one searches for \textit{pre-$p$-cycles}, namely, particular permutations that power to a $p$-cycle. 
Unger \cite{Unger} proved that almost all permutations are pre-$p$-cycles. This `almost all' was quantified in \cite{GlasbyPraegerUnger}, wherein the authors showed that the proportion of elements that are pre-$p$-cycles for at least one $p\in[2, n-3]$ is at least $1- c\log\log n/\log n$, where $n$ is sufficiently large. Call this proportion $\pi_{n}$. 
We note that this problem has also been investigated by K\"{o}nig and Shin--- see Remark 3.1 in \cite[p.\ 1255]{TedescoRe}. 

The authors of \cite{GlasbyPraegerUnger} also raise the question about the size $\pi_{n}$ for all $n$, not just for sufficiently large $n$. Indeed, in Remark 11 in \cite{GlasbyPraegerUnger} it is shown that $\pi_{n} \geq 1/19$ for all $n\geq 5$. This improves on a result from \cite{Seress}. It was conjectured\footnote{Note that the conjecture \cite[Remark\,11]{GlasbyPraegerUnger} cannot hold for $n=5,6,7$ because a computation of the exact values of $\pi_n$ gives:
\begin{equation*}
\pi_5=\frac{1}{4}=0.25, \quad
\pi_6=\frac{35}{144}=0.24\ldots, \quad
\pi_7=\frac{47}{144}=0.326\ldots. 
\end{equation*}} that $\pi_{n} > 1/3$ may hold for all $n\geq 5$.  

Our main result is a proof of this bound on $\pi_{n}$  in the following theorem.
\begin{theorem}\label{th1}
For every integer $n\ge 8$, we have
$
\pi_n> \frac{1}{3}.
$
\end{theorem}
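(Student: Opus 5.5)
We would prove Theorem~\ref{th1} by exhibiting a family of \emph{pairwise disjoint} events, each forcing $g$ to be a pre-$p$-cycle, whose total probability can be computed exactly. We use the standard characterisation: $g\in S_n$ powers to a $p$-cycle if and only if $g$ has exactly one cycle of length divisible by $p$, and $p^2$ does not divide that length. Fix $\ell$ with $n/2<\ell\le n-3$, and let $E_\ell$ be the event that $g$ has a cycle of length $\ell$. Since $\ell>n/2$, there is at most one such cycle, the events $E_\ell$ are pairwise disjoint, and $\Pr(E_\ell)=1/\ell$ exactly.

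Suppose $\ell$ has a prime divisor $p$ with $p>n-\ell$. Then $p\le \ell\le n-3$. All other cycles of $g$ have length at most $n-\ell<p$, so none of them is divisible by $p$. Also $p^2\nmid \ell$, because $p^2>p(n-\ell)$ and $p>n/2\cdot 0+ (n-\ell)$ give $p^2\ge 2p>\ell$ when $\ell<2p$; otherwise $p\le \ell/2$, and then $p^2\mid\ell$ would give a second factor $p\le \ell/p$. To avoid this case analysis, we simply also require that $p\,\|\,\ell$ in the definition below. Hence every $g\in E_\ell$ is a pre-$p$-cycle. This gives the lower bound
\[
\pi_n\;\ge\;\Sigma(n):=\sum_{\substack{n/2<\ell\le n-3\\ \exists\, p\,\|\,\ell,\ p>n-\ell}}\frac1\ell ,
\]
which involves no probabilistic estimates beyond $\Pr(E_\ell)=1/\ell$.

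Heuristically, writing $\ell=xn$, the condition asks that an integer near $xn$ has a prime factor exceeding $n/u$ with $u=1/(1-x)\in(2,\infty)$. The density of such integers is about $1-\rho(u)\ge 1-\rho(2)=\log 2$, where $\rho$ is the Dickman function. So $\Sigma(n)$ should tend to something like $\int_{1/2}^1(1-\rho(\tfrac1{1-x}))\,dx/x$, which is at least $(\log 2)^2\approx0.48$, comfortably above $1/3$. To make this rigorous we would \emph{not} use Dickman asymptotics, since their error terms are too weak. Instead we keep only the simplest subfamily: $\ell=kp$ with $k$ fixed and small (say $k\le K$) and $p$ prime with $p>n-kp$, that is, $p\in\bigl(n/(k+1),\,(n-3)/k\bigr]$, together with $p\nmid k$.

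The issue is that for each fixed $k$ this contributes only about $\frac1k\log\frac{k+1}{k}/\log n$, which decays. So the genuinely useful terms are those with $k$ as large as about $n^{1-\varepsilon}$, where $\ell=kp$ with $p$ a prime larger than $n-\ell$. We would therefore reorganise the sum by the prime. For each prime $p$, the admissible $\ell$ are the multiples $kp\in(n/2,n-3]$ with $kp>n-p$ and $p\nmid k$, and each $\ell$ has at most one prime factor exceeding $n-\ell\ge 3$ in the relevant range once $n-\ell\ge\sqrt{n}$. Otherwise we take the largest such $p$ to avoid double counting. This yields
\[
\Sigma(n)\ \ge\ \sum_{p}\ \frac1p\sum_{\substack{k:\ \max(n/2,\,n-p)<kp\le n-3\\ p\nmid k}}\frac1k ,
\]
an expression in sums of reciprocals of primes over intervals.

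We would bound these sums from below using explicit Mertens/Chebyshev-type estimates (Rosser--Schoenfeld, Dusart) for $\sum_{x<p\le y}1/p$. The inner sum over $k$ is approximately $\log\frac{\min(n/p,\,\ldots)}{\ldots}$, and the resulting main term is an explicit integral that is bounded below by a constant exceeding $1/3$, with errors $O(1/\log^2 n)$ carrying explicit constants. This would settle all $n\ge N_0$, for some explicit $N_0$. For $8\le n<N_0$ we would compute $\Sigma(n)$, which is an elementary finite sum, by computer, and check that $\Sigma(n)>1/3$. For any small $n$ where this weaker quantity fails (plausibly $n$ just above $8$), we would compute $\pi_n$ exactly by enumerating cycle types, as was done for $n=5,6,7$.

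The main obstacle is the transition range: the explicit prime estimates only beat $1/3$ with margin when $\log n$ is fairly large, because the convergence is like $1/\log n$. So $N_0$ may be too large for naive computation of $\Sigma(n)$. If so, we would first sharpen the analytic bound by computing the contribution of small $k$ exactly and only estimating the tail. Failing that, we would speed up the finite check by sieving largest prime factors for all $\ell\le N_0$, which makes $\Sigma(n)$ for all $n\le N_0$ computable in essentially linear total time.
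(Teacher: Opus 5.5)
Your argument fails at the first step: the characterisation of pre-$p$-cycles you use is false. If $g$ has a cycle of length $L$, then its $m$-th power is a product of $\gcd(L,m)$ disjoint cycles, all of length $L/\gcd(L,m)$. So $g^m$ is a single $p$-cycle only if every cycle of $g$ but one has length dividing $m$, and the remaining cycle has $\gcd(L,m)=1$, i.e.\ $L=p$. The correct criterion is \eqref{eq:mip} (from \cite[Corollary~4]{GlasbyPraegerUnger}): $m_p(g)=1$ and $m_{ip}(g)=0$ for all $i\ge 2$.

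For a concrete counterexample, take $n=13$ and let $g$ be a $10$-cycle times a disjoint $3$-cycle. Then $\ell=10\in(13/2,\,10]$, $5\,\|\,10$ and $5>13-10$, so your scheme counts $g$ as a pre-$5$-cycle. But the powers of a $10$-cycle are $10$-cycles, pairs of $5$-cycles, products of five transpositions, or the identity, so no power of $g$ is a $5$-cycle. Your condition only guarantees that some power of $g$ is a product of $\ell/p$ disjoint $p$-cycles.

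Once this is corrected, the only admissible $\ell$ in your disjoint family are primes. Your bound then collapses to $\Sigma(n)=\sum_{n/2<p\le n-3}1/p\sim\log 2/\log n\to 0$; already $\Sigma(8)=1/5$. So the analytic step for $n\ge N_0$ cannot succeed for any $N_0$, and the Dickman-function heuristic concerns the wrong quantity.

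The missing idea is that the bulk of $\pi_n$ comes from primes far below $n/2$. In the paper these are $p\in\bigl((n-3)^{2/7},\,n-3\bigr]$ for $n\ge 1200000$, $p>n/40$ for $50\le n\le 9052$, and numerically chosen ranges in between. For such primes the events ``$g$ is a pre-$p$-cycle'' overlap heavily, so summing over disjoint long-cycle events cannot capture them, and you must control the overlaps. The paper does this in two ways:
\begin{enumerate}
\item For large $n$ it combines $|T_n|/|S_n|\ge 1-e^{\gamma-\mu_n}$, where $\mu_n=\sum_{a<p\le b}1/p$ (from \cite{GlasbyPraegerUnger}), with a bound on $|U_n|/|S_n|$ of the shape $(\log n+1-\log a)\sum_{a<p\le b}p^{-2}$ plus a small error, using Lemma~\ref{lem:prime-square-sum-sharper} and Proposition~\ref{prop:remark11-large-range}.
\item For $50\le n\le 9052$ it uses the second-moment inequality $\Pr(X_n>0)\ge \mathbb E(X_n)^2/\mathbb E(X_n^2)$, with $\mathbb E(m_pm_q)=1/(pq)$ bounding the pairwise overlaps.
\end{enumerate}
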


In \S \ref{jam} we outline some preliminary results on primes that refine slightly the corresponding results in \cite{GlasbyPraegerUnger}, which may be of independent interest. In \S \ref{lid} we apply our results on primes to $S_{n}$. Finally, in \S \ref{jar} we prove Theorem \ref{th1}.

\section{Preliminary lemmas}\label{jam}
\begin{lemma}\label{lem:prime-square-sum-sharper}
Let $A,a,b$ be integers with $50\le A\le a\le b$. Then
\begin{equation}\label{chuck}
\sum_{a<p\le b}\frac1{p^2}
\le
\frac{1+\dfrac{3}{\log A}}{a\log a}
-
\frac{1+\dfrac{3}{\log A}-\dfrac{3}{2\log b}}{b\log b}.
\end{equation}
In particular, taking $A=a$, we obtain
\[
\sum_{a<p\le b}\frac1{p^2}
\le
\frac{1+\dfrac{3}{\log a}}{a\log a}
-
\frac{1+\dfrac{3}{\log a}-\dfrac{3}{2\log b}}{b\log b}.
\]
\end{lemma}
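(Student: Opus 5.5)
We will prove the bound by partial summation against $\theta(x)=\sum_{p\le x}\log p$, using explicit Chebyshev-type estimates of Rosser--Schoenfeld type. These give bounds on $\theta(x)-x$ of the shape $|\theta(x)-x|<\frac{x}{2\log x}$ for $x\ge 563$, and comparable bounds for smaller $x$ after checking finitely many values. Write
\[
\sum_{a<p\le b}\frac1{p^2}=\int_a^b \frac{d\theta(t)}{t^2\log t}.
\]
Integrating by parts with $f(t)=1/(t^2\log t)$ gives
\[
\sum_{a<p\le b}\frac1{p^2}=\frac{\theta(b)}{b^2\log b}-\frac{\theta(a)}{a^2\log a}+\int_a^b \theta(t)\,\frac{2\log t+1}{t^3\log^2 t}\,dt .
\]
Next we write $\theta(t)=t+E(t)$. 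The main term comes from
\[
\int_a^b \frac{2\log t+1}{t^2\log^2 t}\,dt+\frac1{b\log b}-\frac1{a\log a}.
\]
Since $\frac{d}{dt}\bigl(-\frac1{t\log t}\bigr)=\frac{\log t+1}{t^2\log^2t}$, the integrand equals $\frac{d}{dt}\bigl(-\frac{1}{t\log t}\bigr)+\frac{1}{t^2\log t}$. The main part therefore collapses to $\int_a^b \frac{dt}{t^2\log t}$, which is at most $\frac1{a\log a}-\frac1{b\log b}$ up to a lower-order correction $\int_a^b \frac{dt}{t^2\log^2 t}$. That correction is bounded in turn by $\frac{1}{\log A}\bigl(\frac{1}{a\log a}-\frac{1}{b\log b}\bigr)$-type expressions.

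For the error term we use a one-sided bound. In the first displayed expression, $\theta(a)$ enters with a minus sign, so we need a lower bound $\theta(a)\ge a(1-\frac{c}{\log a})$. The quantities $\theta(b)$ and $\theta(t)$ enter with plus signs, so we need upper bounds $\theta(x)\le x(1+\frac{c}{\log x})$. Each error contribution has size $\frac{c}{\log}$ times a main-type term. We then collect all terms of the form $\frac{1}{\log A}\cdot\frac{1}{a\log a}$, using $\log t\ge\log a\ge\log A$ inside the integral. The target shape $\frac{1+3/\log A}{a\log a}-\frac{1+3/\log A}{b\log b}$ absorbs them, provided the constants (from $c=1/2$ in $\theta$, the factor $2\log t+1$, and the $1/\log^2$ correction) sum to at most $3$.

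The extra $+\frac{3}{2\log b}\cdot\frac1{b\log b}$ arises at the upper endpoint. It comes from the boundary term $\frac{E(b)}{b^2\log b}$, which is at most $\frac{1}{2b\log^2 b}$, together with the fact that the integral terms have upper limit $b$ and so contribute only negative multiples of $\frac{1}{b\log b}$, which we keep rather than discard. Concretely, we should carry the antiderivative $-\frac{1}{t\log t}\bigl(1+\frac{3}{\log A}\bigr)$ exactly and bound only the remaining positive pieces at $t=b$.

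The main obstacle is the bookkeeping of constants so that everything fits under exactly $3/\log A$ and $3/(2\log b)$. A second difficulty is validity down to $A=50$, below the range where the clean Rosser--Schoenfeld bound $|\theta(x)-x|<x/(2\log x)$ holds. If the constants do not close, we will first try using $\theta(x)<x(1+\frac{1}{2\log x})$ for all $x>1$ and $\theta(x)>x(1-\frac{1}{\log x})$ for $x\ge 41$, both of which are in Rosser--Schoenfeld. If necessary, we will verify the inequality numerically for $50\le a\le b\le$ some moderate bound such as $10^4$, and use the sharper asymptotic bounds beyond it.
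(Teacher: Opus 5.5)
Your route works, and it differs from the paper's in execution. The paper does discrete summation by parts against $\pi(k)$ with Rosser--Schoenfeld's $x/\log x\le \pi(x)\le \frac{x}{\log x}\bigl(1+\frac{3}{2\log x}\bigr)$, and then bounds everything crudely. It uses $1/\log(k-1)\le 1/\log a$ and $\frac{2k-1}{k^2(k-1)}<\frac{2}{k(k-1)}$, so the sum telescopes to $\frac{2+3/\log A}{\log a}\bigl(\frac1a-\frac1b\bigr)$, and a final step $\log a\le\log b$ produces the $b$-term. There the constant $3$ is simply $2\cdot\frac32$, and no bookkeeping is needed.

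In your Stieltjes version, the bookkeeping you flag as the main obstacle does close. It needs no numerics and not the $x\ge 563$ bound; use only $\theta(x)<x\bigl(1+\frac1{2\log x}\bigr)$ for $x>1$ and $\theta(a)>a\bigl(1-\frac1{\log a}\bigr)$ for $a\ge 41$. In $\int_a^b\frac{dt}{t^2\log t}=\frac1{a\log a}-\frac1{b\log b}-\int_a^b\frac{dt}{t^2\log^2t}$ the correction is \emph{subtracted}, so it is a saving, not a cost as you phrased it. Keeping it cancels the leading part $\int_a^b\frac{dt}{t^2\log^2 t}$ of the error integral and leaves
\[
\sum_{a<p\le b}\frac1{p^2}\le \frac1{a\log a}-\frac1{b\log b}+\frac1{2b\log^2 b}+\frac1{a\log^2 a}+\frac12\int_a^b\frac{dt}{t^2\log^3 t}.
\]
The right-hand side of the lemma minus this expression vanishes at $b=a$. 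Its $b$-derivative is $\frac{1}{b^2\log^3 b}\bigl(\frac{3\log b(\log b+1)}{\log A}-\log b-\frac52\bigr)>0$, since $\log b\ge\log A$, and this proves the lemma.

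Your route buys precision and generality. It works verbatim for real $50\le A\le a\le b$. As $b\to\infty$ it yields $\frac{1}{a\log a}\bigl(1+\frac1{\log a}+\frac1{2\log^2 a}\bigr)$ rather than $\frac{1+3/\log A}{a\log a}$, because the paper's replacement of $\log(k-1)$ by $\log a$ discards the negative secondary term of the main sum. The paper's route buys brevity: a few lines of elementary discrete manipulation, which suffice for its applications.
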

\begin{proof}
We follow the proof of Lemma~7 in \cite{GlasbyPraegerUnger}. By partial summation,
\[
\sum_{a<p\le b}\frac1{p^2}
=
\frac{\pi(b)}{b^2}-\frac{\pi(a)}{a^2}
-
\sum_{k=a+1}^{b}\frac{-2k+1}{k^2(k-1)^2}\pi(k-1).
\]
Using the estimates\footnote{Note that the upper bound is valid for all real $x>1$ and the lower bound for all real $x\geq 17$. If we only require integral $x$, then, as in \cite{GlasbyPraegerUnger}, we can take $x\geq 11$.} from (3.2) and (3.5) in \cite{RS}
\begin{equation}\label{cents}
\frac{x}{\log x} \le \pi(x)\le \frac{x}{\log x}\left(1+\frac{3}{2\log x}\right)\qquad (x\ge 11, \quad x\in \mathbb{Z}),
\end{equation}
we obtain
\begin{align*}
\sum_{a<p\le b}\frac1{p^2}
&\le
\frac{1}{b\log b}\left(1+\frac{3}{2\log b}\right)
-\frac{1}{a\log a} \\
&\qquad\qquad
+\sum_{k=a+1}^{b}\frac{2k-1}{k^2(k-1)\log(k-1)}
\left(1+\frac{3}{2\log(k-1)}\right).
\end{align*}
Since $A\le a\le k-1$, we have
\[
1+\frac{3}{2\log(k-1)}\le 1+\frac{3}{2\log A}.
\]
Also, since $a\le k-1$, we have
\[
\frac1{\log(k-1)}\le \frac1{\log a}.
\]
Hence the sum above is at most
\[
\frac{1+\dfrac{3}{2\log A}}{\log a}
\sum_{k=a+1}^{b}\frac{2k-1}{k^2(k-1)}.
\]
Now $k-\tfrac12<k$, so
\[
\frac{2k-1}{k^2(k-1)}
=
\frac{2(k-\tfrac12)}{k^2(k-1)}
<
\frac{2}{k(k-1)}.
\]
Therefore
\begin{align*}
\sum_{k=a+1}^{b}\frac{2k-1}{k^2(k-1)\log(k-1)}
\left(1+\frac{3}{2\log(k-1)}\right)
&\le
\frac{2+\dfrac{3}{\log A}}{\log a}
\sum_{k=a+1}^{b}\frac1{k(k-1)} \\
&=
\frac{2+\dfrac{3}{\log A}}{\log a}\left(\frac1a-\frac1b\right).
\end{align*}
Substituting this into the previous estimate gives
\begin{align*}
\sum_{a<p\le b}\frac1{p^2}
&\le
\frac{1}{b\log b}\left(1+\frac{3}{2\log b}\right)
-\frac{1}{a\log a}
+\frac{2+\dfrac{3}{\log A}}{\log a}\left(\frac1a-\frac1b\right) \\
&=
\frac{1+\dfrac{3}{\log A}}{a\log a}
+\frac{1+\dfrac{3}{2\log b}}{b\log b}
-\frac{2+\dfrac{3}{\log A}}{b\log a}.
\end{align*}
Finally, since $\log a\le \log b$, we have
\[
-\frac{2+\dfrac{3}{\log A}}{b\log a}
\le
-\frac{2+\dfrac{3}{\log A}}{b\log b},
\]
and so
\[
\sum_{a<p\le b}\frac1{p^2}
\le
\frac{1+\dfrac{3}{\log A}}{a\log a}
-
\frac{1+\dfrac{3}{\log A}-\dfrac{3}{2\log b}}{b\log b},
\]
as claimed.
\end{proof}

\begin{corollary}\label{cor:prime-square-sum-sharper-real}
Let $A,a,b$ be real numbers with $50\le A\le a\le b$. Then
\begin{equation*}
\sum_{a<p\le b}\frac1{p^2}
\le
\frac{1+\dfrac{3}{\log \lfloor A\rfloor}}{\lfloor a\rfloor\log \lfloor a\rfloor}
-
\frac{1+\dfrac{3}{\log \lfloor A\rfloor}-\dfrac{3}{2\log \lfloor b\rfloor}}{\lfloor b\rfloor\log \lfloor b\rfloor}.
\end{equation*}
In particular, if $A=a$, then
\[
\sum_{a<p\le b}\frac1{p^2}
\le
\frac{1+\dfrac{3}{\log \lfloor a\rfloor}}{\lfloor a\rfloor\log \lfloor a\rfloor}
-
\frac{1+\dfrac{3}{\log \lfloor a\rfloor}-\dfrac{3}{2\log \lfloor b\rfloor}}{\lfloor b\rfloor\log \lfloor b\rfloor}.
\]
\end{corollary}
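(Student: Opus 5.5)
The plan is to reduce the corollary directly to Lemma~\ref{lem:prime-square-sum-sharper}, applied to the integers $A'=\lfloor A\rfloor$, $a'=\lfloor a\rfloor$, $b'=\lfloor b\rfloor$.

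First I check the hypotheses of the lemma. Since $50\le A\le a\le b$ and $50$ is an integer, monotonicity of the floor function gives $50\le \lfloor A\rfloor\le\lfloor a\rfloor\le\lfloor b\rfloor$. So the lemma applies to the triple $(A',a',b')$.

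Next I compare the two prime sums. A prime $p$ is an integer, so $p\le b$ holds if and only if $p\le\lfloor b\rfloor$. Likewise $p>a$ implies $p>\lfloor a\rfloor$, though not conversely. The set of primes in $(a,b]$ is therefore contained in the set of primes in $(\lfloor a\rfloor,\lfloor b\rfloor]$. Since all terms are positive,
\[
\sum_{a<p\le b}\frac1{p^2}\le\sum_{\lfloor a\rfloor<p\le\lfloor b\rfloor}\frac1{p^2}.
\]
The right-hand side is bounded by \eqref{chuck} with $A,a,b$ replaced by their floors. This is exactly the displayed bound in the corollary.

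For the ``in particular'' statement, I set $A=a$, so that $\lfloor A\rfloor=\lfloor a\rfloor$, and substitute into the general bound.

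I do not expect any step to be a genuine obstacle. The only care needed is the direction of the inclusion of index sets, which enlarges the sum, and the fact that taking floors preserves the chain of inequalities together with the lower bound $50$.
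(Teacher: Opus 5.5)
Your proposal is correct and matches the paper's proof. The paper applies Lemma~\ref{lem:prime-square-sum-sharper} to $\lfloor A\rfloor,\lfloor a\rfloor,\lfloor b\rfloor$ after comparing the two sets of primes, exactly as you do. One correction, which does not affect validity: the converse you deny does hold, since an integer $p>\lfloor a\rfloor$ satisfies $p\ge\lfloor a\rfloor+1>a$. So the two sets of primes coincide exactly, as the paper states, although the inclusion you use is all the argument needs.
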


\begin{proof}
The primes $p\in (a, b]$ coincide with those in the interval
$\lfloor a\rfloor<p\le \lfloor b\rfloor$, so the result follows by applying
Lemma~\ref{lem:prime-square-sum-sharper} with $\lfloor A\rfloor$, $\lfloor a\rfloor$
and $\lfloor b\rfloor$.
\end{proof}
Noting that $b\geq A,$ we can write
$$1 + 3/\log A - 3/(2\log b) \geq 1 + 3/(2\log A).$$
Hence we can write the right-hand side of (\ref{chuck}) as $\lambda/(a\log a) - \mu/(b\log b)$, where $\lambda$ and $\mu$ are functions solely of $A$.
In \cite{GlasbyPraegerUnger} the authors take $A= 12$, which leads to $\lambda =2.22$ and $\mu=1.61$, after rounding. While this result is sufficient for our purposes in \S\S \ref{lid} and \ref{jar}, we note that the corresponding result in \cite{GlasbyPraegerUnger} has been used in \cite{DudekDJ,Campbell,Jonathans}, and hence appears to be of independent interest. As such, we make some small improvements below.

Dusart \cite[p.\ 55]{Dusart} gives an upper bound that improves upon that in (\ref{cents}):
\begin{equation}\label{cents2}
\frac{x}{\log x}\left( 1 + \frac{1}{\log x}\right) \le \pi(x)\le \frac{x}{\log x}\left(1+\frac{c}{\log x}\right)\qquad (x\ge 599),
\end{equation}
where $c=1.2762$. In fact, the upper bound holds for all $x>1$ (and is sharp in this range).
Using this, with $599\leq A\leq a \leq b$, one arrives at
$$\sum_{a<p\leq b} \frac{1}{p^{2}} \leq \frac{1 + 2c/\log A}{a\log a} - \frac{1 + (c+1)/\log A}{b\log b}.$$
With $A=599$ we arrive at $\lambda = 1.3992$ and $\mu = 1.355$, provided that $599\leq a \leq b$. We can extend this result by keeping $b\geq 599$ and taking some $a_{0} < 599$. Therefore
\begin{equation}\label{berry}\sum_{a_{0}<p\leq b} \frac{1}{p^{2}} = \sum_{a_{0}<p\leq 599} \frac{1}{p^{2}} + \sum_{599<p\leq b} \frac{1}{p^{2}} \leq \sum_{a_{0}<p\leq 599} \frac{1}{p^{2}} + \frac{1.3991}{599 \log 599}.
\end{equation}
We now aim to find the smallest constant $\lambda_{1}$ such that the right-side of (\ref{berry}) is bounded above by $\lambda_{1}/(a_{0} \log a_{0})$. This is a quick computation: we  may take $\lambda_{1} = 1.408$. This establishes that $\lambda = 1.408$ and $\mu = 1.355$, provided that $a\leq 599\leq b$. Since we already had this result when $599\leq a \leq b$ we need only extend to $a\leq b \leq 599$. This is another easy computation to perform. We therefore arrive at
\begin{equation}\label{coasters}
\sum_{a<p\le b}\frac1{p^2}
\le
\frac{1.41}{a\log a}
-
\frac{1.35}{b\log b}, \quad (2\leq a\leq b).
\end{equation}
We can refine this further if we ignore the negative term on the right-side of (\ref{coasters}). Indeed, in applications such as \cite{DudekDJ,Campbell,Jonathans}, one often wants a sum simply over $p>a$.
To make this refinement we use
$$\pi(x)\le \frac{x}{\log x}\left(1+\frac{1}{\log x} + \frac{2.51}{\log^{2} x}\right), \quad x\geq 355991,$$
from Dusart \cite[p.~55]{Dusart} to show that we may take $c= 1.1964$ in (\ref{cents2}, for all $x\geq 355991$. The same method of proof as above then gives that 
\begin{equation}\label{platters}
\sum_{p>a} \frac{1}{p^{2}} < \frac{1.19}{a\log a},
\end{equation}
provided that $a\geq 355991$.

Even if $a$ were arbitrarily large, one could not reduce the constant 1.19 to anything beneath 1. We ran a computation for all $a\leq 10^{8}$ and found that the `worst' constant on the right-side of (\ref{platters}) was $0.9557\ldots$ coming from $a=95364006$. This shows that (\ref{platters}) in fact holds for all $a\geq 2$. Further estimations, using more refined estimates on $\pi(x)$ are certainly possible, and may well show that a constant of $1$ is attainable on the right-side of (\ref{platters}).

\section{Applying prime bounds to $S_{n}$}\label{lid}

Fix $G =\mathrm{~S}_n$. Let $\lambda(g)=\left\langle 1^{m_1(g)} 2^{m_2(g)} \cdots n^{m_n(g)}\right\rangle$ be the partition of $n$ whose parts are the cycle lengths of $g$, and part $k$ has multiplicity $m_k(g)$. Define
$$
\begin{aligned}
\mathcal{P}_n & =\left\{p \mid a(n)<p \leqslant a(n)^{d(n)}\right\}, \\
T_n & =\left\{g \in G \mid m_p(g) \geqslant 1 \text { for some } p \in \mathcal{P}_n\right\}, \\
U_p(G) & =\left\{g \in G \mid m_p(g) \geqslant 1 \text { and } \sum_{k \geqslant 1} m_{k p}(g) \geqslant 2\right\}, \text { and } \\
U_n & =\bigcup_{p \in \mathcal{P}_n} U_p(G)
\end{aligned}
$$
\begin{proposition}\label{prop:remark11-large-range}
Let $a(n)$ and $d(n)$ be functions such that
\[
a(n)\ge 50,\qquad d(n)>1,\qquad a(n)^{d(n)}\le n-3
\]
for all $n$. Set
\[
b(n):=a(n)^{d(n)},\qquad
\mathcal P_n:=\{p\text{ prime}:a(n)<p\le b(n)\}.
\]
Let $\pi_n$ denote the proportion of elements of $S_n$ which are pre-$p$-cycles
for some prime $p$ with $2\le p\le n-3$. Then
\[
\pi_n\ge 1-\frac{2.015}{d}-(\log n+1-\log a)\frac{1+\dfrac{3}{\log 50}}{\lfloor a\rfloor\log \lfloor a\rfloor}-\frac{4.09\log n}{an\log a}.
\]
\end{proposition}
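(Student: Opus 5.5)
The plan is to bound $\pi_n$ below by $|T_n\setminus U_n|/n!$, following the argument of Remark 11 in \cite{GlasbyPraegerUnger}. The first step is to check the containment. If $g\in T_n\setminus U_n$, then $m_p(g)\ge 1$ for some $p\in\mathcal P_n$, while $\sum_k m_{kp}(g)=1$. So $g$ has exactly one cycle whose length is divisible by $p$, and that cycle has length exactly $p$. Let $m$ be the lcm of all the other cycle lengths. Then $p\nmid m$, and $g^{m}$ is a $p$-cycle. Since $p\le b(n)\le n-3$, $g$ is a pre-$p$-cycle, and hence
\[
\pi_n\ \ge\ \frac{|T_n|}{n!}-\sum_{p\in\mathcal P_n}\frac{|U_p(G)|}{n!}.
\]

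Next I will bound $|T_n|/n!$ from below. By inclusion--exclusion, or by the standard fact that for distinct primes $p_1,\dots,p_r$ with $\sum p_i\le n$ the proportion of $g$ having a $p_i$-cycle for every $i$ is $\prod 1/p_i$ (up to the usual alternating corrections), the proportion of $g$ with no $p$-cycle for $p\in\mathcal P_n$ is at most roughly $\prod_{p\in\mathcal P_n}(1-1/p)$. I would use the bound already extracted in \cite{GlasbyPraegerUnger}: the proportion of $g$ with no $p$-cycle for $p\in(a,a^{d}]$ is at most $\exp\bigl(-\sum_{a<p\le a^d}1/p\bigr)$ plus small error terms. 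Mertens-type bounds (Rosser--Schoenfeld) then give
\[
\sum_{a<p\le a^{d}}\frac1p\ \ge\ \log d-\frac{1}{\log^2 a},
\]
and this yields a loss of the form $2.015/d$. The constant $2.015$ has to absorb $e^{1/\log^2 50}$ and the inclusion--exclusion error. I expect this to be the main obstacle: getting an explicit constant that is valid uniformly for $a\ge 50$ requires care with the truncation of inclusion--exclusion, since the $p$ sum to more than $n$. I would first try the Bonferroni-type estimate used in \cite[Lemma 8]{GlasbyPraegerUnger} and simply track the constants.

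The final step bounds $|U_p(G)|/n!$. Fix a $p$-cycle and a second cycle of length $kp$. A counting argument gives
\[
\frac{|U_p(G)|}{n!}\ \le\ \sum_{k\ge1,\,(k+1)p\le n}\frac{1}{p}\cdot\frac{1}{kp}\ \le\ \frac{1}{p^2}\Bigl(\log\frac np+1\Bigr),
\]
where the $k=1$ term carries the extra factor $1/2$ that is already dominated. Summing over $p\in\mathcal P_n$ and using $\log(n/p)\le \log n-\log a$ gives
\[
\sum_{p\in\mathcal P_n}\frac{|U_p(G)|}{n!}\ \le\ (\log n+1-\log a)\sum_{a<p\le b}\frac1{p^2}.
\]
Corollary~\ref{cor:prime-square-sum-sharper-real} with $A=50$, dropping the negative term, bounds the last sum by $(1+3/\log 50)/(\lfloor a\rfloor\log\lfloor a\rfloor)$. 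The remaining term $4.09\log n/(an\log a)$ should account for the lower-order corrections. These come from the exact count of permutations with a $p$-cycle (those with $m_p\ge2$), and from the boundary terms where $(k+1)p$ is close to $n$. I would bound them by $\sum_{a<p}\log n/(np)$-type sums restricted by $p\le n$, using \eqref{coasters} to control the constant. Combining the three bounds gives the stated inequality.
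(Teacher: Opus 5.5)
\textbf{Gap in the $T_n$ bound.} Your containment argument and your bound for $U_n$ are fine. In fact your $U_p$ step is cleaner than the paper's. You have $\mathbf 1_{U_p}\le\binom{m_p}{2}+\sum_{k\ge2}m_pm_{kp}$, and the identities $\mathbb E\binom{m_p}{2}=1/(2p^2)$ and $\mathbb E(m_pm_{kp})=1/(kp^2)$ hold exactly whenever $(k+1)p\le n$ (these quantities vanish otherwise). So no boundary correction arises, and the term $4.09\log n/(an\log a)$ is pure slack for you. The paper inherits it from the $\varepsilon_p$ terms for $(k+1)p\in\{n-1,n\}$ in the proof of Proposition~9 of \cite{GlasbyPraegerUnger}. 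Your speculation about its origin is therefore unnecessary but harmless.

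The genuine gap is the bound $|T_n|/n!\ge 1-2.015/d$, which you never establish.
\begin{itemize}
\item Your claim that $\Pr(m_p(g)=0\ \forall p\in\mathcal P_n)$ is at most $e^{-\mu_n}$ ``plus small error terms'' is not what \cite{GlasbyPraegerUnger} proves.
\item It is not a general principle either. Cycle-avoidance probabilities can exceed $\exp(-\sum_{k\in S}1/k)$ by a factor tending to $e^\gamma$. For $S=\{1,\dots,n-1\}$ the probability is $1/n$, while $e^{-H_{n-1}}\sim e^{-\gamma}/n$.
\item Truncated inclusion--exclusion cannot give a bound $c/d$ uniformly in $d>1$, which the statement requires. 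With $X=\sum_{p\in\mathcal P_n}m_p$, Bonferroni gives $\Pr(X=0)\le\sum_{j\le 2k}(-1)^j\,\mathbb E\binom{X}{j}$. To use this you need lower bounds on the odd factorial moments. These are cut down by the constraint that the chosen primes sum to at most $n$, which is severe when $b$ is near $n$, and you have no such lower bounds.
\item The level-two bound is $1-\mu_n+\mu_n^2/2\approx 1-\log d+\tfrac12\log^2 d$, which is not $O(1/d)$.
\end{itemize}

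The missing ingredient is Proposition~6 of \cite{GlasbyPraegerUnger}, namely $\Pr(m_p(g)=0\ \forall p\in\mathcal P_n)\le e^{\gamma-\mu_n}$. Combine it with the explicit Mertens estimate $\mu_n\ge\log d-7.38/\log^3 a$ from \cite{Vanlalngaia2017}. This gives $e^\gamma\exp(7.38/\log^3 50)\approx 1.781\cdot 1.131\le 2.015$. Your accounting via $e^{1/\log^2 50}\approx 1.07$ omits the factor $e^\gamma$, which is most of the constant. Once $e^\gamma$ is in place, even the cruder Rosser--Schoenfeld-type bound $\mu_n\ge\log d-3/(2\log^2 a)$ would suffice, since $e^\gamma e^{3/(2\log^2 50)}\approx 1.96$.
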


\begin{proof}
Write $a=a(n)$, $d=d(n)$ and $b=b(n)=a^d$. Let $T$, $U_p$ and $U$ be the sets
defined above.
Since $b\le n-3$, every element of $T_n\setminus U_n$ is a pre-$p$-cycle for some
prime $p\in\mathcal P_n$, and hence
\[
\pi_n\ge \frac{|T_n|-|U_n|}{|S_n|}.
\]
Set
\[
\mu_n:=\sum_{p\in\mathcal P_n}\frac1p=\sum_{a<p\le b}\frac1p.
\]
By Proposition~6 of \cite{GlasbyPraegerUnger},
\[
\frac{|T_n|}{|S_n|}
=
1-\operatorname{Prob}\bigl(m_p(g)=0\text{ for all }p\in\mathcal P_n\bigr)
\ge 1-e^{\gamma-\mu_n}.
\]
Now, for $x\ge 50$, we have
\[
\sum_{p\le x}\frac1p
=
\log\log x+B+O^*\!\left(\frac{3.69}{\log^3 x}\right),
\]
(see Table~2 and Corollary~20 of \cite{Vanlalngaia2017}). Since $a\ge 50$, this gives
\begin{align*}
\mu_n
&=
\sum_{p\le a^d}\frac1p-\sum_{p\le a}\frac1p \\
&=
\log\log(a^d)-\log\log a
+O^*\!\left(\frac{3.69}{\log^3(a^d)}\right)
+O^*\!\left(\frac{3.69}{\log^3 a}\right) \\
&=
\log d
+O^*\!\left(\frac{3.69}{(d\log a)^3}\right)
+O^*\!\left(\frac{3.69}{(\log a)^3}\right).
\end{align*}
Hence
\[
\mu_n\ge
\log d-\frac{3.69}{(d\log a)^3}-\frac{3.69}{(\log a)^3}\ge
\log d-\frac{7.38}{(\log a)^3},
\]
and therefore
\[
e^{\gamma-\mu_n}
\le
\frac{e^\gamma}{d}
\exp\!\left(
\frac{7.38}{(\log a)^3}
\right).
\]
Thus
\[
\frac{|T_n|}{|S_n|}
\ge
1-
\frac{e^\gamma}{d}
\exp\!\left(
\frac{7.38}{(\log a)^3}\right)\ge 1-\frac{2.015}{d}
.
\]
We now estimate $|U_n|$. For each prime $p\in\mathcal P_n$, the proof of
Proposition~9 of \cite{GlasbyPraegerUnger} shows that
\[
\frac{|U_p|}{|S_n|}
<
\frac{1+\log(m+1)}{p^2}+\varepsilon_p,
\qquad
m:=\left\lfloor\frac{n-2}{p}\right\rfloor-1,
\]
where $\varepsilon_p=\varepsilon$ if $n-1\le (k+1)p\le n$ has a solution in $k$,
and $\varepsilon_p=0$ otherwise, with
\[
\varepsilon=\frac{2}{a(n-1)}.
\]
Since $p>a$, we have
\[
m+1=\left\lfloor\frac{n-2}{p}\right\rfloor<\frac{n}{p},
\]
and therefore
\[
1+\log(m+1)<1+\log\!\left(\frac{n}{p}\right)
\le \log n+1-\log a.
\]
It follows that
\[
\frac{|U_p|}{|S_n|}
\le
\frac{\log n+1-\log a}{p^2}+\varepsilon_p.
\]
Hence
\[
\frac{|U_n|}{|S_n|}
\le
(\log n+1-\log a)\sum_{a<p\le b}\frac1{p^2}
+\sum_{a<p\le b}\varepsilon_p.
\]
As in \cite[Proposition~9]{GlasbyPraegerUnger},
\[
\sum_{a<p\le b}\varepsilon_p
\le 2\varepsilon\log_a n
=
\frac{4\log_a n}{a(n-1)}.
\]
Now apply Corollary~\ref{cor:prime-square-sum-sharper-real} with $A=50$ to obtain
\[
\sum_{a<p\le b}\frac1{p^2}
\le
\frac{1+\dfrac{3}{\log 50}}{\lfloor a\rfloor\log \lfloor a\rfloor}
-
\frac{1+\dfrac{3}{\log 50}-\dfrac{3}{2\log \lfloor b\rfloor}}
{\lfloor b\rfloor\log \lfloor b\rfloor}.
\]Therefore
\[
\frac{|U_n|}{|S_n|}
\le
(\log n+1-\log a)
\left(
\frac{1+\dfrac{3}{\log 50}}{\lfloor a\rfloor\log \lfloor a\rfloor}
-
\frac{1+\dfrac{3}{\log 50}-\dfrac{3}{2\log \lfloor b\rfloor}}
{\lfloor b\rfloor\log \lfloor b\rfloor}
\right)
+\frac{4\log_a n}{a(n-1)},
\]
and dropping all the terms with $b$ we have
\[
\frac{|U_n|}{|S_n|}
\le
(\log n+1-\log a)
\left(
\frac{1+\dfrac{3}{\log 50}}{\lfloor a\rfloor\log \lfloor a\rfloor}
\right)
+\frac{4\log n}{a\log a(n-1)}.
\]
Combining the estimates for $|T_n|$ and $|U_n|$ gives
\[
\pi_n\ge \frac{|T_n|-|U_n|}{|S_n|}\ge 1-\frac{2.015}{d}-(\log n+1-\log a)\frac{1+\dfrac{3}{\log 50}}{\lfloor a\rfloor\log \lfloor a\rfloor}-\frac{4.09\log n}{an\log a},
\]
since $4/(n-1)<4.09/n$ for $n\ge a(n)\ge  50$.
The required inequality follows.
\end{proof}

\section{Proof of Theorem \ref{th1}}\label{jar}
We divide into four ranges depending on the size of $n$.\subsection{Case 1: $8\le n\le 49$.}
For these values of $n$, we compute $\pi_n$ exactly. Let
\[
\lambda=\{1^{m_1}2^{m_2}\cdots n^{m_n}\}\vdash n
\]
be a partition of $n$, and let
\[
C(\lambda):=\prod_{k=1}^n k^{m_k}m_k!.
\]
The proportion of elements of $S_n$ having cycle type $\lambda$ is $C(\lambda)^{-1}$.
Moreover, by \cite[Corollary~4]{GlasbyPraegerUnger}, a permutation of cycle type $\lambda$ is a pre-$p$-cycle,
for a prime $2\le p\le n-3$, if and only if
\begin{equation}\label{eq:mip}
    m_p=1
\qquad\text{and}\qquad
m_{ip}=0 \quad \text{for all } i\ge 2.
\end{equation}
Therefore
\[
\pi_n
=
\sum_{\lambda\vdash n}
\frac{1}{C(\lambda)}
\mathbf{1}\!\left(
\exists\text{ prime }2\le p\le n-3
\, |\, 
m_p=1,\, 
m_{ip}=0\, \forall\, i\ge 2
\right),
\]
where $\mathbf{1}(\cdots)$ denotes the indicator function.
We evaluated this finite sum exactly for each
$8\le n\le 49$, using a Python program. The computation shows that
\[
\pi_n>\frac{1}{3},
\qquad\text{for all }8\le n\le 49.
\]
In particular, the minimum is reached at $n=8$, where $\pi_8=553/1440=0.384027\dots$.
\subsection{Case 2: $50\le n\le 9052$.}

For each prime $p$ with $n/40<p\le n-3$, let $A_p$ denote the event that
a permutation in $S_n$ is a pre-$p$-cycle. Since every element counted by
one of the events $A_p$ is counted by $\pi_n$, we have
\begin{equation}\label{eq:pinconnectionprob}
    \pi_n\ge \Pr\!\left(\bigcup_{n/40<p\le n-3}A_p\right).
\end{equation}
We estimate the probability on the right-hand side using the second moment
method. Set
$
\mathcal Q_n:=\{p\text{ prime}: n/40<p\le n-3\}
$
and define the variable
\[
X_n:=\sum_{p\in\mathcal Q_n}\mathbf 1_{A_p}.
\]
Then the event $X_n>0$ is precisely the event that at least one of the
events $A_p$ occurs. Hence \eqref{eq:pinconnectionprob} gives
$
\pi_n\ge \Pr(X_n>0).
$
By the second moment method,
\begin{equation}\label{eq:secondmoment}
    \Pr(X_n>0)\ge \frac{\mathbb E(X_n)^2}{\mathbb E(X_n^2)}.
\end{equation}
We first treat the range $200\le n\le 9000$. Define
\[
M_n:=
\sum_{\substack{p\text{ prime}\\ n/40<p\le n-3}}
\frac1p
\left(
1-\frac{H_{\lfloor(n-p)/p\rfloor}}{p}
\right),
\]
where $H_m$ denotes the $m$-th harmonic number, and 
\[
R_n:=
\sum_{\substack{p<q\\ n/40<p<q\le n-3\\ p+q\le n}}
\frac1{pq}.
\]
We now estimate $\mathbb E(X_n)$. Recall from \eqref{eq:mip} that a
permutation is a pre-$p$-cycle if and only if
\[
m_p=1,
\qquad
m_{ip}=0 \quad (i\ge 2).
\]
Thus
\[
\mathbb E(X_n)=\sum_{p\in\mathcal Q_n}\Pr(A_p).
\]
Fix $p\in\mathcal Q_n$, and put
\[
r_p:=\left\lfloor\frac{n-p}{p}\right\rfloor.
\]
If a permutation lies in $A_p$, then it has exactly one $p$-cycle. After
removing this $p$-cycle, the remaining permutation on $n-p$ points has no
cycles of lengths
\[
p,2p,\ldots,r_pp.
\]
The chosen $p$-cycle contributes a factor $1/p$, and so
\[
\Pr(A_p)
=
\frac1p
\Pr_{S_{n-p}}\left(
\text{no cycle has length }p,2p,\ldots,r_pp
\right).
\]
The probability that a permutation on $n-p$ points has a cycle of length
$kp$ is at most $1/(kp)$. Hence, by the union bound,
\[
\Pr_{S_{n-p}}\left(
\text{some cycle has length }p,2p,\ldots,r_pp
\right)
\le
\sum_{k=1}^{r_p}\frac1{kp}
=
\frac{H_{r_p}}p.
\]
Therefore
\[
\Pr_{S_{n-p}}\left(
\text{no cycle has length }p,2p,\ldots,r_pp
\right)
\ge
1-\frac{H_{r_p}}p,
\]
and hence
\[
\Pr(A_p)
\ge
\frac1p\left(1-\frac{H_{r_p}}p\right).
\]
It follows that
\[
\mathbb E(X_n)
\ge
\sum_{p\in\mathcal Q_n}
\frac1p
\left(
1-\frac{H_{\lfloor(n-p)/p\rfloor}}p
\right)
=
M_n.
\]
We next estimate $\mathbb E(X_n^2)$. Since
\[
X_n^2
=
\sum_{p\in\mathcal Q_n}\mathbf 1_{A_p}
+
2\sum_{\substack{p<q\\ p,q\in\mathcal Q_n}}
\mathbf 1_{A_p}\mathbf 1_{A_q},
\]
we have
\[
\mathbb E(X_n^2)
=
\mathbb E(X_n)
+
2\sum_{\substack{p<q\\ p,q\in\mathcal Q_n}}
\Pr(A_p\cap A_q).
\]
Fix distinct primes $p<q$ in $\mathcal Q_n$. If $p+q>n$, then
$A_p\cap A_q=\varnothing$, since a permutation cannot contain both a
$p$-cycle and a $q$-cycle. If $p+q\le n$, then
\[
\mathbf 1_{A_p\cap A_q}\le m_p(g)m_q(g).
\]
Therefore
\[
\Pr(A_p\cap A_q)
\le
\mathbb E\bigl(m_p(g)m_q(g)\bigr)
=
\frac1{pq}.
\]
The equality follows from a standard cycle-counting argument. Indeed, $m_p(g)m_q(g)$ counts the number of ordered choices of a $p$-cycle and a $q$-cycle in $g$. To compute $E\bigl(m_p(g)m_q(g)\bigr)$, we count triples $(g,C_p,C_q)$, where $g\in S_n$, $C_p$ is a $p$-cycle of $g$, and $C_q$ is a $q$-cycle of $g$. There are
$
\binom{n}{p}
$
ways to choose the $p$ points forming the $p$-cycle. After this, there are
$
\binom{n-p}{q}
$
ways to choose the $q$ points forming the $q$-cycle. On these chosen points,
there are $(p-1)!$ possible $p$-cycles and $(q-1)!$ possible $q$-cycles.
The remaining $n-p-q$ points may be permuted arbitrarily, giving
$(n-p-q)!$ choices. Hence the number of such triples is
\[
\binom{n}{p}\binom{n-p}{q}(p-1)!(q-1)!(n-p-q)!
=
\frac{n!}{pq}.
\]
Dividing by $|S_n|=n!$, we obtain
$
\mathbb E\bigl(m_p(g)m_q(g)\bigr)=(pq)^{-1}.
$
Thus
$
\Pr(A_p\cap A_q)\le (pq)^{-1}.
$
Consequently,
\[
\sum_{\substack{p<q\\ p,q\in\mathcal Q_n}}
\Pr(A_p\cap A_q)
\le
\sum_{\substack{p<q\\ n/40<p<q\le n-3\\ p+q\le n}}
\frac1{pq}
=
R_n.
\]
Therefore
$
\mathbb E(X_n^2)\le \mathbb E(X_n)+2R_n.
$
Substituting this into \eqref{eq:secondmoment}, we get
\[
\Pr(X_n>0)
\ge
\frac{\mathbb E(X_n)^2}{\mathbb E(X_n)+2R_n}.
\]
For fixed $R_n\ge 0$, the function
\[
f(x):=\frac{x^2}{x+2R_n}
\]
is increasing for $x\ge 0$, since
\[
f'(x)=
\frac{x(x+4R_n)}{(x+2R_n)^2}\ge 0.
\]
Using $\mathbb E(X_n)\ge M_n$, we therefore obtain
\[
\Pr(X_n>0)
\ge
\frac{M_n^2}{M_n+2R_n}.
\]
Hence
\[
\pi_n
\ge
\Pr(X_n>0)
\ge
\frac{M_n^2}{M_n+2R_n}.
\]
A direct computation gives
\[
\frac{M_n^2}{M_n+2R_n}>\frac13
\qquad
(200\le n\le 9052).
\]
The minimum is approximately $0.34574$, attained at $n=8010$.\\
It remains to treat $50\le n\le 199$. In this range, the preceding lower
bound for $\Pr(A_p)$ is not strong enough, so we compute the two moments in
\eqref{eq:secondmoment} exactly.\\
First fix $p\in\mathcal Q_n$. The event $A_p$ means that the permutation has
exactly one $p$-cycle and no other cycle whose length is divisible by $p$.
After choosing the $p$-cycle, the remaining permutation is on $n-p$ points
and must have no cycles of lengths $p,2p,3p,\ldots$.
Recall that the exponential generating function for permutations in which
cycles of lengths belonging to a set $S$ are forbidden is
\begin{equation}\label{eq:genfunctforbidden}
\exp\left(\sum_{\substack{k\ge 1\\ k\notin S}}\frac{x^k}{k}\right)
=
\frac{1}{1-x}
\exp\left(-\sum_{k\in S}\frac{x^k}{k}\right).
\end{equation}
Taking $S$ to be the set of positive multiples of $p$, we obtain
\[
\Pr(A_p)
=
\frac1p
[x^{n-p}]
\frac{(1-x^p)^{1/p}}{1-x}.
\]
Here $[x^r]F(x)$ denotes the coefficient of $x^r$ in the power series $F(x)$.
Therefore
\[
\mathbb E(X_n)
=
\sum_{p\in\mathcal Q_n}
\frac1p
[x^{n-p}]
\frac{(1-x^p)^{1/p}}{1-x}.
\]

Next let $p<q$ be primes in $\mathcal Q_n$. If $p+q>n$, then $A_p\cap A_q=\varnothing$. If $p+q\le n$, then after choosing one $p$-cycle and one $q$-cycle, the remaining permutation is on $n-p-q$ points. In order for the original permutation to lie in $A_p\cap A_q$, the remaining permutation must have no cycle whose length is divisible by $p$ or by $q$. Using \eqref{eq:genfunctforbidden}, the forbidden set is
\[
S=\{kp:k\ge 1\}\cup\{kq:k\ge 1\}.
\]
Thus
\[
\sum_{k\in S}\frac{x^k}{k}
=
\sum_{i\ge 1}\frac{x^{ip}}{ip}
+
\sum_{j\ge 1}\frac{x^{jq}}{jq}
-
\sum_{\ell\ge 1}\frac{x^{\ell pq}}{\ell pq}.
\]
The final term is subtracted because multiples of $pq$ have been counted
twice. Hence
\[
\exp\left(-\sum_{k\in S}\frac{x^k}{k}\right)
=
(1-x^p)^{1/p}
(1-x^q)^{1/q}
(1-x^{pq})^{-1/(pq)}.
\]
Therefore
\[
\Pr(A_p\cap A_q)
=
\frac1{pq}
[x^{n-p-q}]
\frac{(1-x^p)^{1/p}(1-x^q)^{1/q}(1-x^{pq})^{-1/(pq)}}{1-x}.
\]
Define
\[
M_n^\ast:=
\sum_{p\in\mathcal Q_n}
\frac1p
[x^{n-p}]
\frac{(1-x^p)^{1/p}}{1-x}
\]
and
\[
R_n^\ast:=
\sum_{\substack{p<q\\ p,q\in\mathcal Q_n\\ p+q\le n}}
\frac1{pq}
[x^{n-p-q}]
\frac{(1-x^p)^{1/p}(1-x^q)^{1/q}(1-x^{pq})^{-1/(pq)}}{1-x}.
\]
Then
\[
\mathbb E(X_n)=M_n^\ast,
\qquad
\mathbb E(X_n^2)=M_n^\ast+2R_n^\ast.
\]
Thus
\[
\pi_n
\ge
\Pr(X_n>0)
\ge
\frac{(M_n^\ast)^2}{M_n^\ast+2R_n^\ast}.
\]
A direct computation of these coefficients gives
\[
\frac{(M_n^\ast)^2}{M_n^\ast+2R_n^\ast}
>
\frac13
\qquad
(50\le n\le 199).
\]
The minimum is approximately $0.447389$, attained at $n=60$. Therefore
\[
\pi_n>\frac13
\qquad
(50\le n\le 9052).
\]
\subsection{Case 3: $9053\le n\le 1199999$.}
For integers $a,b$ with $12\le a<b\le n-3$, let
\[
P_n(a,b):=\{p\text{ prime}:a<p\le b\}.
\]
For each prime $p\in P_n(a,b)$, let $U_p\subseteq S_n$ be the set of permutations
having at least one $p$-cycle and at least one further cycle whose length is divisible by $p$,
and set
\[
U(n,a,b):=\frac{1}{|S_n|}\left|\bigcup_{p\in P_n(a,b)}U_p\right|.
\]
Following the same argument as in the proof of \cite[Proposition~9]{GlasbyPraegerUnger}, we overcount elements of $U_p$ by
products $g=g_1g_2g_3$ with disjoint supports, where $g_1$ is a $p$-cycle and $g_2$
is a $kp$-cycle for some $k\ge 1$. In the generic case $(k+1)p\le n-2$, the counting
argument gives the contribution
\[
\frac{1}{2p^2}+\sum_{k=2}^{\lfloor (n-2)/p\rfloor-1}\frac{1}{kp^2}
=
\frac{H_{\lfloor (n-2)/p\rfloor-1}-\frac12}{p^2},
\]
where $H_m$ denotes the $m$-th harmonic number.
In the exceptional case $n-1\le (k+1)p\le n$, there is at most one value of $k$ for each
$p$, namely $k=n/p-1$ if $p\mid n$ or $k=(n-1)/p-1$ if $p\mid (n-1)$. Therefore
\begin{align*}
U(n,a,b)\le &
\sum_{a<p \leq \min \{b,\lfloor(n-2) / 2\rfloor\}}
\frac{H_{\lfloor (n-2)/p\rfloor-1}-\frac12}{p^2}
+
\sum_{\substack{a<p\le b\\ p\mid n}}\frac{1}{(n/p-1)p^2}
\\ &
+
\sum_{\substack{a<p\le b\\ p\mid (n-1)}}\frac{1}{((n-1)/p-1)p^2}.
\end{align*}
For these values of $n$, we use a rigorous numerical implementation of \cite[Proposition 9]{GlasbyPraegerUnger}, instead of using lower bounds for finite sums of reciprocal of primes. More precisely, for integers $a,b$ with
$ 12\le a<b\le n-3,$
let
\[
\mu(a,b):=\sum_{a<p\le b}\frac1p,
\]
and define
\[
L(n;a,b):=1-e^{\gamma-\mu(a,b)}-U(n;a,b),
\]
where $U(n;a,b)$ satisfies the upper bound above.
Then
$
\pi_n\ge L(n;a,b)
$
for every admissible pair $(a,b)$. A direct computation, over a finite search set of admissible pairs $(a,b)$,
shows that for each $9053\le n\le 1199999$ there exists a pair
$(a,b)$ in the search set such that
$
L(n;a,b)>\frac13.
$
Therefore
\[
\pi_n>\frac13
\qquad
(9053\le n\le 1199999).
\]
In particular, the minimum is attained at $n=9061 $ and is greater than \begin{equation*}0.333777.
\end{equation*}
\subsection{Case 4: $n\ge 1200000$.}

We apply Proposition~\ref{prop:remark11-large-range} with
\[
a(n)=(n-3)^{2/7},
\qquad
d(n)=\frac72.
\]
Then
$
a(n)^{d(n)}=n-3.
$
Moreover, for $n\ge 1200000$, we have
\[
a(n)=(n-3)^{2/7}>54>50.
\]
Thus the hypotheses of Proposition~\ref{prop:remark11-large-range} are satisfied. Hence,
\begin{equation}\label{fridge}
\pi_n\ge 
1-\frac{2.015}{7/2}
-(\log n+1-\log a(n))
\frac{1+\dfrac{3}{\log 50}}{\lfloor a(n)\rfloor\log \lfloor a(n)\rfloor}
-\frac{4.09\log n}{a(n)n\log a(n)}.
\end{equation}
It remains to verify that the right-side of (\ref{fridge}) exceeds $1/3$ for all $n\ge 1200000$. Let
\[
m=\lfloor a(n)\rfloor=\left\lfloor (n-3)^{2/7}\right\rfloor.
\]
Then $m\le a(n)<m+1$, and hence
\[
m^{7/2}+3\le n <(m+1)^{7/2}+3.
\]
Therefore
\[
\log n\le \log\!\left((m+1)^{7/2}+3\right),
\qquad
\log a(n)\ge \log m.
\]
It follows that
\[
\log n+1-\log a(n)
\le
\log\!\left((m+1)^{7/2}+3\right)+1-\log m.
\]
Also,
\[
\frac{\log n}{a(n)n\log a(n)}
\le
\frac{
\log\!\left((m+1)^{7/2}+3\right)
}
{
m\left(m^{7/2}+3\right)\log m
}.
\]
Consequently,
$
\pi_n\ge \widetilde R(m),
$
where
\begin{align*}
\widetilde R(m):=
1- & \frac{2.015}{7/2}
-
\left(1+\frac{3}{\log 50}\right)\,
\frac{
\log\!\left((m+1)^{7/2}+3\right)+1-\log m
}
{m\log m}
\\ &-
\frac{
4.09\log\!\left((m+1)^{7/2}+3\right)
}
{
m\left(m^{7/2}+3\right)\log m
}.
\end{align*}
A direct calculation shows that this function is increasing in $m$ for every $m\ge 54$, so the minimum is reached at $m=54$. Hence
\begin{equation}\label{worst}
    \pi_n\ge \Tilde{R}(54)=0.333757>\frac{1}{3}.
\end{equation}

The overall minimum occurs in Case 4 in (\ref{worst}), from which we deduce that
$$\pi_{n} > 0.333757 > 1/3, $$
for all $n\geq 8$, which proves Theorem \ref{th1}.

\section*{Acknowledgements}
We are grateful to Stephen Glasby and Cheryl Praeger for enlightening discussions on this topic.

\end{document}